\documentclass[12pt]{article}
\usepackage{amsmath,amsthm,amsfonts,amssymb,amscd}
\usepackage[usenames]{color}
\usepackage[dvips]{graphicx}
\usepackage{epsfig}
\def\phi{\varphi}

\textheight=23cm \textwidth=16cm \hoffset=-13mm
\topmargin=-10mm

\begin{document}

\newtheorem{lem}{Lemma}
\newtheorem{predl}{Proposition}
\newtheorem{theorem}{Theorem}
\newtheorem{defin}{Definition}
\newtheorem{zam}{Remark}

\bigskip
\centerline{\bf TESTS OF EXPONENTIALITY BASED ON YANEV-CHAKRABORTY}
\centerline{\bf CHARACTERIZATION, AND THEIR EFFICIENCY}

\bigskip

\centerline{Volkova K. Y.\footnote{Research supported by grant RFBR No. 13-01-00172, grant NSh No. 2504.2014.1 and by SPbGU grant No. 6.38.672.2013}}

\bigskip
\centerline{Saint-Petersburg State University, Russia}
 \section{ Introduction}
\bigskip

In this paper we develop goodness-of-fit tests for exponentiality using a characteriza\-tion based on property of order statistics.
The problem formulation is as follows: let $X_1,X_2,\ldots,X_n$ be i.i.d. observations having the continuous df $F$. Consider testing of composite hypothesis
of exponentiality $ H_0: \mbox{$F \in {\cal{E(\lambda)}}$},
$ where ${\cal {E(\lambda)}}$ denotes the class of exponential
distributions with the density
$f(x)=\lambda e^{-\lambda x}, x \geq 0,$ here $\lambda>0$ is some unknown parameter.

There exists considerable literature on the problem of testing exponentiality. Such tests are constructed by different techniques, see books and reviews
 \cite{Ahsan},  \cite{A90}, \cite{BB95}, \cite{DY}, \cite{HM02}, \cite{NJ02}. Some tests of exponentiality  are based on the loss-of-memory pro\-perty, see \cite{AA},
 \cite{Ang}, \cite{Kou}, and several tests
use other characterizations of exponentiality \cite{Litv2}, \cite{Nou}, \cite{BarHen}, \cite{Kou2}, \cite{RT}, \cite{Niknik}, \cite{Swa}, \cite{NikVol}, \cite{Rank}.

We use the idea of testing statistical hypotheses using the characterizations by the property of equidistribution, and construct test statistics by means of so-called $V$- and $U$-empirical df's, see \cite{Jan}, \cite{Kor}.
Let us explain this method.

Suppose that the df $F$ belongs to the class of distributions $\cal F$,
if the corresponding density $f$ has derivatives of all orders in the neighbourhood of zero.

Arnold and Villasenor conjectured in \cite{Ar}, and Yanev and Chakraborty  proved recently in \cite{Yanev}, see also \cite{chakra}, that the following property characterizes the exponential law within the class $\cal F:$

\emph{Let $X_1,\ldots,X_n$ be non-negative i.i.d. rv's with df $F$ from class $\cal F.$
 Then the statistics $\max(X_1, X_2, X_3)$ and
$\max(X_1, X_2)+ \frac13 X_3$ are identically distributed if and only if the df $F$ is exponential.}

Consider the usual empirical df $F_n(t)=n^{-1}\sum_{i=1}^n\textbf{1}\{X_i<t\}, t \in R^1,$
based on the observations $X_1,\dots,X_n.$ According to our characterization we construct
for $t\geq 0$ the $V$-empirical df's by the formulae
\begin{align*}
H_n(t)=\frac{1}{n^{3}}\sum_{i_1,i_2,i_3
=1}^{n}\textbf{1}\{\max(X_{i_1}, X_{i_2},  X_{i_3})<t&\},\quad
t\geq 0,\\
G_n(t)=\frac{1}{3n^{3}}\sum_{i_1,i_2,i_3
=1}^{n}[\textbf{1}\{\max(X_{i_1}, X_{i_2})+ \frac{X_{i_3}}{3}&<t\}+\\+
\textbf{1}\{\max(X_{i_2}, X_{i_3})+ \frac{X_{i_1}}{3}<t\}+
\textbf{1}\{\max&(X_{i_3}, X_{i_1})+ \frac{X_{i_2}}{3}<t\}], \quad t\geq 0.
\end{align*}

It is known that the properties of $V$- and $U$-empirical df's are similar to
the properties of usual empirical df's, see \cite{HJS}, \cite{Jan}. Hence for large $n$ the df's $H_n$ and  $G_n$ should be close under $H_0,$ and we can
measure their closeness by using some test statistics.

We suggest two scale-invariant statistics
\begin{align}
I_n&=\int_{0}^{\infty} \left(H_n(t)-G_n(t)\right)dF_n(t),\label{I_n_Ar}\\
D_n&=\sup_{t \geq 0}\mid H_n(t)-G_n(t)\mid\label{D_n_Ar},
\end{align}
assuming that their large values are critical.

We discuss their limiting distributions under the null hypothesis and calculate
their efficiencies against common alternatives from the class $\cal F.$
The statistic $D_n$ has the non-normal limiting distribution,
hence we use the notion of local exact Bahadur efficiency (BE)
\cite{Bahadur}, \cite{Nik}, because the Pitman approach to efficiency is not applicable.
However, it is known that the local BE and the limiting Pitman efficiency
usually coincide, see  \cite{Wie}, \cite{Nik}.

The  large deviation asymptotics is the key tool for the evaluation of the
exact BE, and we address this question using the results of \cite{nikiponi}.
Finally, we study the conditions of local optimality of our tests and describe the
"most favorable" alternatives for them.

\bigskip

 \section{ Integral statistic  $I_{n}$}

\bigskip

Without loss of generalization we can assume that  $\lambda=1$. The statistic
$I_{n}$ is asymptotically equivalent to the $V$-statistic of degree  $4$  with
the centered kernel $\Psi(X_1, X_2,X_3, X_4)$ given by
\begin{gather*}
\Psi(X_{1}, X_{2},X_{3}, X_{4})=\frac14 \sum_{\pi(i_1, \ldots,
i_{4})} \textbf{1}\{\max(X_{i_1}, X_{i_2}, X_{i_3})<X_{i_{4}}\}-\\-
\frac{1}{24}\sum_{\pi(i_1, \ldots, i_{4})}
\textbf{1}\{\max(X_{i_1}, X_{i_2})+ \frac{X_{i_3}}{3}<X_{i_{4}}\},
\end{gather*}
where $\pi(i_1, \ldots,
i_{4})$ means all permutations of different indices  from $\{i_1, \ldots, i_4\}.$

\begin{theorem}
Under null hypothesis as $n \rightarrow \infty$ the statistic $I_{n}$ is asymptotically normal
with asymptotic variance given by
$$\sqrt{n}I_n \stackrel{d}{\longrightarrow}{\cal
{N}}(0,\frac{23}{10920}).$$
\end{theorem}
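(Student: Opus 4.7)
The plan is to treat $I_n$ as a non-degenerate $V$-statistic of degree four and apply Hoeffding's central limit theorem, with the asymptotic variance emerging from an explicit computation of the first projection.

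First, one shows that $I_n$ agrees with the $V$-statistic $V_n$ associated to the symmetric kernel $\Psi$ up to an $O_P(1/n)$ remainder coming from coinciding indices, and that $\Psi$ is centered under $H_0$. The latter is a direct consequence of the Yanev--Chakraborty characterization: both symmetric summands of $\Psi$ have mean $P_{H_0}(\max(X_1,X_2,X_3) < X_4) = 1/4$, so $E_{H_0}\Psi = 0$.

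The core of the proof is the first Hoeffding projection $\psi_1(x) = E_{H_0}[\Psi(x, X_2, X_3, X_4)]$. A useful shortcut is that the symmetrization of the first indicator equals $1/4$ almost surely on the event that the four arguments are distinct (exactly $3!$ of the $4!$ orderings place the largest value in the comparison slot), so $\psi_1(x)$ reduces to $1/4$ minus the conditional mean of the second symmetric summand. I would split the twelve essentially distinct terms of that summand according to the role of $X_1$ --- the comparison threshold, the $x/3$ summand, or one of the two entries of the inner maximum --- and reduce each of the three resulting conditional probabilities to an elementary integral against standard exponential densities. This yields $\psi_1$ as an explicit linear combination of $e^{-x/3}$, $e^{-x}$, $e^{-2x}$, $e^{-3x}$, and the identity $E_{H_0}\psi_1(X)=0$ (via $E_{H_0}e^{-kX}=1/(k+1)$) serves as a sanity check.

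Non-degeneracy is then immediate, since $\psi_1$ is a non-trivial linear combination of exponentials with distinct rates. Hoeffding's theorem for $V$-statistics of degree $m = 4$ gives $\sqrt{n}\,I_n \xrightarrow{d} \mathcal{N}(0, 16\zeta_1)$ with $\zeta_1 = E_{H_0}\psi_1(X)^2$, and the final variance is obtained from $\zeta_1 = \sum_{i,j} b_ib_j/(\beta_i+\beta_j+1)$, where $b_i$ are the coefficients of $\psi_1$ and $\beta_i \in \{1/3,1,2,3\}$ the corresponding rates. Reducing everything to the common denominator $2^7 \cdot 3 \cdot 5 \cdot 7 \cdot 13 = 174720$ gives $\zeta_1 = 23/174720$, whence $16\zeta_1 = 23/10920$ as asserted. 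The main obstacle is the bookkeeping in this last arithmetic; the conceptual content, once the projection is in closed form, is standard.
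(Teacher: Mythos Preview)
Your proposal is correct and follows essentially the same route as the paper: compute the first Hoeffding projection of the symmetric degree-four kernel $\Psi$, verify non-degeneracy, and apply Hoeffding's theorem to obtain asymptotic variance $16\,\zeta_1 = 23/10920$. The paper conditions on $X_4=s$ and evaluates the two positive probabilities separately (their sum is indeed $1/4$), whereas your shortcut that the first symmetrized indicator equals $1/4$ almost surely collapses that step; otherwise the computations coincide, yielding the same projection $\psi(s)=\tfrac{3}{8}e^{-s}-\tfrac{9}{16}e^{-2s}+\tfrac{1}{4}e^{-3s}-\tfrac{1}{12}e^{-s/3}$ and $\zeta_1 = 23/174720$.
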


\begin{proof}Let $X_1,\ldots, X_{4}$ be independent standard exponential rv's. It is well-known that non-degenerate $V$- and $U$ -statistics are asymptotically normal, see \cite{Hoeffding},
\cite{Kor}. To prove that
 the kernel $\Psi(X_1, X_2,X_3,
X_4)$ is non-degenerate, let calculate its projection $\psi(s).$ For fixed $X_{4}=s$ we have:
\begin{align*}
\psi(s)=&E(\Psi(X_1, X_2,X_3, X_{4})\mid
X_{4}=s)=\\=&\frac{1}{4}P(\max(X_1,
X_2,X_3)<s)+\frac{3}{4}P(\max(s,X_2,X_3)<X_1)-\\-&
\frac{1}{4}P( \max(X_1, X_2)+ \frac{X_3}{3}<s)-\frac{1}{4}P( \max(X_1, X_2)+ \frac{s}{3}< X_3)-\\
- &\frac{1}{2}P( \max(s,X_1)+ \frac{X_2}{3}< X_3).
\end{align*}

It follows from the above characterization that the first and the third probability both are equal to:
$$P(\max(X_1,
X_2,X_3)<s)=\frac{1}{4}P( \max(X_1, X_2)+ \frac{X_3}{3}<s)=(1-e^{-s})^3.$$

The second term can be evaluated as follows:
\begin{multline*}
P(\max(s,X_2,X_3)<X_1)=P(s<X_1,
s>X_2, s>X_3)+\\+2 P(X_2<X_1, X_2>s,
X_2>X_3)=\\=(1-F(s))F(s)^{2}+
\left[\frac{2}{3}F(s)^3-F(s)^{2}+\frac13\right]=\frac{1-F(s)^3}{3}.
\end{multline*}

It remains to calculate two last terms, the calculations give us:
\begin{align*}
P( \max(X_1, X_2)+ \frac{s}{3}< X_3)=&\int_{s/3}^{\infty}F^2(x-s/3)dF_x= \frac13 e^{-s/3},\\
P( \max(s,X_1)+ \frac{X_2}{3}< X_3)=&
P(s<X_1, X_1+\frac{X_2}{3} <X_3)+\\+&P(s>X_1, s+\frac{X_2}{3} <X_3)=\frac34 e^{-s}- \frac38 e^{-2s}.
\end{align*}

Hence we get the final expression for the projection of the kernel $\Psi:$
\begin{gather}\label{psi}
\psi(s)=
\frac{3}{8}e^{-s}-\frac{9}{16}e^{-2s}+\frac{1}{4}e^{-3s}-\frac{1}{12}e^{-s/3}.
\end{gather}

The variance of the projection $\Delta^2 = E\psi^2(X_1)$ under $H_{0}$
is  given by
\begin{gather*}
\Delta^2 = \int_{0}^{\infty} \psi^2 (s) e^{-s}ds =\frac{23}{174720}\approx
0.0001316.
\end{gather*}

Therefore the kernel $\Psi $ is non-degenerate. Due to Hoeffding's  theorem on asymptotic normality of $V$- and $U$-statistics, see again \cite{Hoeffding}, \cite{Kor}, we get
the statement of the theorem.
\end{proof}

\section{Large deviations and local efficiency of $I_n$}

Now we shall evaluate the large deviation asymptotics of the sequence of statistics (\ref{I_n_Ar}) under $H_0.$ The kernel $\Psi$ is centered, bounded and non-degenerate.
Hence according to the
theorem on large deviations of such statistics from \cite{nikiponi}, see also \cite{anirban},
 \cite{Niki10}, we obtain the following result.
\begin{theorem}
For  $a>0$
$$
\lim_{n\to \infty} n^{-1} \ln P ( I_n >a) = - f(a),
$$
where the function $f$ is continuous for sufficiently small $a>0,$ and $$
f(a) \sim \frac{a^2}{32 \Delta^2_{\psi}} \sim   \frac{5460}{23}a^2, \, \mbox{as} \, a \to 0.
$$
\end{theorem}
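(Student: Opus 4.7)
The plan is to apply the large-deviation theorem of \cite{nikiponi} for non-degenerate $V$-statistics with bounded centered kernels. All hypotheses of that theorem have effectively been checked while proving Theorem 1: the kernel $\Psi$ of degree $m = 4$ is a finite linear combination of indicator functions, hence uniformly bounded; it is centered under $H_{0}$ by virtue of the Yanev--Chakraborty characterization, which equates the distributions of the two statistics entering $\Psi$; and it is non-degenerate since the projection variance $\Delta^{2}_{\psi} = 23/174720$ is strictly positive. So there is essentially no new analytic work to perform; the task is to identify the relevant $V$-statistic and read off the asymptotic constant.

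First I would write $I_n = V_n + R_n$, where $V_n$ is the $V$-statistic of degree $4$ with kernel $\Psi$ from Section 2 and $R_n$ is the deterministic combinatorial correction of order $O(n^{-1})$ responsible for the ``asymptotically equivalent'' statement made there. Since $R_n$ is negligible on the exponential scale $e^{-cn}$, the logarithmic tails of $P(I_n > a)$ and $P(V_n > a)$ agree. Applying the theorem of \cite{nikiponi} to $V_n$ then yields the existence of the rate function $f$, continuous in a right neighbourhood of $0$, together with the quadratic expansion
$$f(a) \sim \frac{a^{2}}{2m^{2}\,\Delta^{2}_{\psi}} = \frac{a^{2}}{32\,\Delta^{2}_{\psi}}, \qquad a\to 0^{+},$$
where the factor $32 = 2m^{2}$ reflects the kernel degree $m=4$ and is consistent with the asymptotic variance $m^{2}\Delta^{2}_{\psi} = 23/10920$ obtained in Theorem 1. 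Substituting the value of $\Delta^{2}_{\psi}$ gives $1/(32\Delta^{2}_{\psi}) = 174720/(32\cdot 23) = 5460/23$, which is precisely the constant in the statement.

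The only mildly delicate point I anticipate is the control of the remainder $R_n$, since one must verify that replacing $I_n$ by the pure $V$-statistic $V_n$ does not perturb the logarithmic tail at scale $a$. However, because the discrepancy is bounded deterministically by a constant times $1/n$, for any fixed $a>0$ one has $\{I_n > a\} = \{V_n > a\}$ for all sufficiently large $n$, so the reduction is immediate and does not alter the limit $n^{-1}\ln P(\cdot)$. Aside from that bookkeeping, the proof is a direct invocation of the cited theorem and a one-line arithmetic substitution.
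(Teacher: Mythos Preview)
Your proposal is correct and follows the same route as the paper: check that the degree-$4$ kernel $\Psi$ is bounded, centered under $H_0$, and non-degenerate (all of which was done in proving Theorem~1), and then invoke the large-deviation theorem of \cite{nikiponi} to read off $f(a)\sim a^{2}/(2m^{2}\Delta_{\psi}^{2})=a^{2}/(32\Delta_{\psi}^{2})$. One small simplification: since $H_n$, $G_n$ are themselves $V$-statistics of degree $3$ and $dF_n$ is the empirical measure, the integral $I_n$ is \emph{exactly} a $V$-statistic of degree $4$ with symmetrized kernel $\Psi$, so the remainder $R_n$ you introduce is identically zero and the bookkeeping step is unnecessary (and in any case, were there diagonal corrections they would be random but a.s.\ $O(1/n)$ rather than deterministic, though your sandwich argument would still go through).
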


Suppose that under the alternative $H_1$ the observations have the df $G(\cdot,\theta)$ and the density $g(\cdot,\theta), \ \theta \geq 0,$ such that
$G(\cdot, 0) \in {\cal {E(\lambda)}}.$
The measure of BE for any sequence $\{T_n\}$ of test statistics is the exact slope
$c_{T}(\theta)$ describing the rate of exponential decrease for the
attained level under the alternative df $G(\cdot,\theta).$ According to Bahadur theory  \cite{Bahadur}, \cite{Nik} the exact slopes may be found by
using the following Proposition.

\noindent {\bf Proposition}.\,{\it Suppose that the following two
conditions hold:
\[
\hspace*{-3.5cm} \mbox{a)}\qquad  T_n \
\stackrel{\mbox{\scriptsize $P_\theta$}}{\longrightarrow} \
b(\theta),\qquad \theta > 0,\nonumber \] where $-\infty <
b(\theta) < \infty$, and $\stackrel{\mbox{\scriptsize
$P_\theta$}}{\longrightarrow}$ denotes convergence in probability
under $G(\cdot\ ; \theta)$.
\[
\hspace*{-2cm} \mbox{b)} \qquad \lim_{n\to\infty} n^{-1} \ \ln \
P_{H_0} \left( T_n \ge t \ \right) \ = \ - h(t)\nonumber
\] for any $t$ in an open interval $I,$ on which $h$ is
continuous and $\{b(\theta), \: \theta > 0\}\subset I$. Then
$c_T(\theta) \ = \ 2 \ h(b(\theta)).$}

Note that the exact slopes always satisfy the inequality \cite{Bahadur}, \cite{Nik}
\begin{equation}
\label{Ragav}
c_T(\theta) \leq 2 K(\theta),\, \theta > 0,
\end{equation}
where $K(\theta)$ is the Kullback-Leibler "distance"\, between the alternative and the null-hypothesis $H_0.$ In our case $H_0$ is composite, hence for any
alternative density $g_j(x,\theta)$ one has
$$
K_j(\theta) = \inf_{\lambda>0} \int_0^{\infty} \ln [g_j(x,\theta) / \lambda \exp(-\lambda x) ] g_j(x,\theta) \ dx.
$$
This quantity can be easily calculated as $\theta \to 0$ for particular alternatives.
According to (\ref{Ragav}), the local BE of the sequence of statistics ${T_n}$ is defined as
$$
e^B (T) = \lim_{\theta \to 0} \frac{c_T(\theta)}{2K(\theta)}.
$$

Now we will give some examples of efficiency calculations. First consider the Makeham density
$$ g_1(x,\theta)=(1+\theta(1-e^{-x}))\exp(-x-\theta( e^{-x}-1+x)),\theta \geq 0, x
\geq 0,$$
and the corresponding df $G_1(x, \theta)$. According to the Law of Large Numbers
for $U$- and $V$-statistics \cite{Kor}, the limit in probability under $H_1$ is equal to
\begin{gather*}
b_1(\theta)=P_{\theta}(\max(X, Y, Z)<W)-P_{\theta}(\max(X, Y)+ \frac{Z}{3}<W).
\end{gather*}
It is easy to show (see also \cite{NiPe}) that
$$b_1(\theta) \sim 4\theta  \int_{0}^{\infty} \psi(s)h_1(s)ds,
$$
where $h_1(s)=\frac{\partial}{\partial\theta}g_1(s,\theta)\mid _{\theta=0}$ and $\psi(s)$ is the projection from (\ref{psi}).
Therefore for the Makeham alternative we have
\begin{gather*}
b_1(\theta) \sim 4\theta
\int_{0}^{\infty}(\frac{3}{8}e^{-s}-\frac{9}{16}e^{-2s}+\frac{1}{4}e^{-3s}-\frac{1}{12}e^{-s/3})
e^{-s}(2-2e^{-x}-x)ds =\\=\frac{3}{280}\theta, \quad \theta \to 0,
\end{gather*}
and the local exact slope of the sequence $I_n$ as $\theta \to 0$ admits the representation
$$c_1(\theta)=b^2_1(\theta)/(16\Delta^2) \sim 0.055\theta^2.$$

The Kullback-Leibler "distance" \, $K_1(\theta)$ between the Makeham distribution and the null-hypothesis $H_0$ satisfies   $K_1(\theta) \sim \frac{\theta^2}{24}, $ $\theta \to
0.$ Hence the local BE is equal to
$$e^B(I)=\lim_{\theta \to 0}\frac{c_1(\theta)}{2K_1(\theta)}\approx 0.654.$$

Consider the Weibull alternative with the density
$$ g_2(x,\theta)=(1+\theta)x^\theta \exp(-x^{1+\theta}),\theta \geq 0, x \geq
0,$$ and the corresponding df $G_2(x, \theta)$. After some calculations we have:
\begin{gather*}
b_2(\theta) \sim  (\frac{7}{16}\ln(3)-\frac{5}{8}\ln(2) )\theta\approx
0.047 \,\theta, \quad \theta \to 0.
\end{gather*}
The local exact slope admits the representation  $c_2(\theta) \sim 1.068\,\theta^2,$ $\theta
\to 0,$ while $K_2(\theta) \sim \pi^2\theta^2/12,$ $\theta \to 0.$  Consequently, the local efficiency of the test is  0.649.

The third is the Lehmann alternative with d.f.
$$ G_3(x,\theta)=F^{1+\theta}(x), \theta \geq 0, x\geq 0.$$
Omitting the calculations  similar to previous cases we get $
b_3(\theta)\sim (\frac23-\frac38 \ln{3}-\frac{\sqrt{3}}{24}\pi)\theta,$ $c_3(\theta)\sim 0.371\,\theta^2,$ $ \theta \to 0.$ It is easy to show that
$K_3(\theta) \sim \frac{\pi^2 (12-\pi^2)}{72} \theta^2,$ $\theta \to 0.$ Therefore the local BE is equal to 0.636.

The last alternative is the gamma-density
$$ g_4(x,\theta)=\frac{x^{\theta}}{\Gamma(\theta+1)}e^{-x}, \theta \geq 0, x\geq 0.$$
In this case we get  $ b_4(\theta)\sim(\frac12\ln(3)-\frac34\ln(2) )\theta \approx 0.029 \,\theta,$ $c_4(\theta)\sim 0.412\,\theta^2,
K_4(\theta) \sim
(\frac{\pi^2}{12}-\frac{1}{2})\theta^2, \, \theta \to 0.$ Hence the local BE is
equal to 0.638.

Next table gathers the values of local BE.

\begin{table}[!hhh]\centering
\caption{Local Bahadur efficiencies of the statistic $S_n$ under alternatives.}
\medskip
\begin{tabular}{|c|c|c|c|c|}
\hline
Alternative & Makeham & Weibull & Lehmann & Gamma \\
\hline
Efficiency & 0.654 & 0.649 &  0.636   & 0.638 \\
\hline
\end{tabular}
\end{table}
\bigskip

\section{Kolmogorov-type statistic $D_n$}

\bigskip

Now we consider the Kolmogorov type statistic (\ref{D_n_Ar}). Its indisputable merit is consistency against any alternative that follows directly from the characterization as such, while the integral statistic $I_n$ is not always consistent.

In our case for fixed  $t\geq 0$ the difference $H_n(t) - G_n(t)$
is a family of $V$-statistics with the kernels, depending on $t\geq 0:$
\begin{gather*}
\Xi(X,Y, Z;t)= \textbf{1}\{\max(X,Y, Z)<t\}-\frac13 \textbf{1}\{\max(X, Y)+ \frac{Z}{3}< t\}-\\
  -\frac13 \textbf{1}\{\max( Y, Z)+ \frac{X}{3}< t\}-\frac13 \textbf{1}\{\max(X, Z)+ \frac{Y}{3}< t\}.
\end{gather*}

The projection of this kernel $\xi(s;t):=E(\Xi(X, Y, Z; t)\mid X=s)$ for fixed
$t$ has the form:
\begin{equation*}
\xi(s;t)=  P\{\max(s,Y, Z)<t\}-\frac23 P\{\max(s, Y)+ \frac{Z}{3}< t\}-\frac13 P\{\max( Y, Z)+ \frac{s}{3}< t\}.
\end{equation*}

After standard computation we get:
\begin{align*}
P\{\max(s,Y, Z)<t\}=\textbf{1}\{s<t\}F^2(t)&,\\
P\{ \max(s,Y)+ \frac{Z}{3}< t\}=P(s<Y, \, Y&+\frac{Z}{3} <t)+P(s>Y, s+\frac{Z}{3} <t)=\\
=&\textbf{1}\{s<t\} \left(1-e^{3s-3t}+\frac32e^{2s-3t}-\frac32e^{-t}\right),\\
P\{\max( Y, Z)+ \frac{s}{3}< t\}=\textbf{1}\{s<3t\}F^2&(t-\frac{s}{3}).
\end{align*}

Combining the results obtained, we find that the projection
$\xi(s;t)$ for fixed $t$ is equal:
\begin{equation}
\xi(s;t)=  \textbf{1}\{s<t\}\left[\frac13-e^{-t}+e^{-2t}-e^{2s-3t}+\frac23e^{3s-3t}\right]  -\frac13 \textbf{1}\{s<3t\}(1-e^{t-s/3})^2.\label{xi}
\end{equation}
Now let find the variance function $\delta^2(t)= E\xi^2(X_1,t)$ of this projection under
$H_{0}.$ We have after some simple calculations:
\begin{multline*}
\delta^2(t)=\frac{1}{5}e^{-t}-\frac23 e^{-2t}+(\frac{26}{9}-\frac43 t)e^{-3t}
-\frac{43}{21}e^{-4t}+\\
+\frac{1}{10}e^{-5t}-\frac{4}{45}e^{-6t}-\frac{2}{7}e^{-5t/3}+\frac{1}{2}e^{-7t/3}+e^{-8t/3}
-\\-\frac85 e^{-10t/3}-2e^{-11t/3} +2e^{-13t/3}.
\end{multline*}
\begin{figure}[h!]
\begin{center}
\includegraphics[scale=0.35]{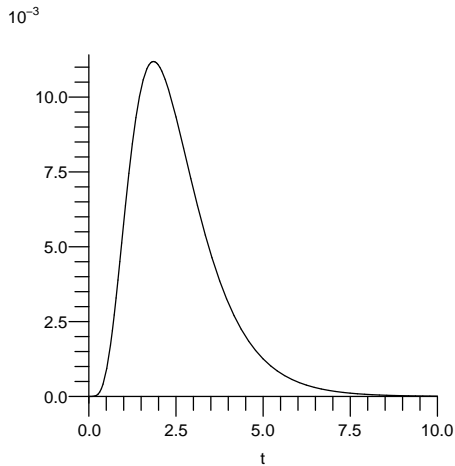}\caption{Plot of the function $\delta^2(t).$ }
\end{center}
\end{figure}

It is seen that our family of kernels $\Xi(X,Y,Z;t)$ is non-degenerate in the sense of \cite{Niki10} and $\delta^2=\sup_{t\geq 0}\delta^2(t) \approx 0.01119.$
This value will be important in the sequel when calculating the large deviation asymptotics.

The limiting distribution of the statistic $D_n$ is unknown. Using the methods of \cite{Silv}, one can show that the
$U$-empirical process
$$\eta_n(t) =\sqrt{n} \left(H_n(t) - G_n(t)\right), \ t\geq 0,
$$
weakly converges in $D(0,\infty)$ as $n \to \infty$ to certain centered Gaussian
process $\eta(t)$ with calculable covariance. Then the sequence of statistics
$\sqrt{n} D_n$ converges in distribution to the rv   $\sup_{t\geq0} |\eta(t)|$ but currently it is impossible to find explicitly its distribution.
Hence it is reasonable to determine the critical values for statistics  $D_n$ by simulation.
\bigskip

\section{Large deviations and local efficiency of  $D_n$}

Now we obtain the logarithmic large deviation asymptotics of the
sequence of statistics (\ref{D_n_Ar}) under $H_0.$
The family of kernels $\{\Xi(X, Y, Z; t), t\geq 0\}$ is not only centered but bounded. Using the results from \cite{Niki10} on
large deviations for the supremum  of non-degenerate $U$- and $V$-statistics, we obtain the following result.
\begin{theorem}
For  $a>0$
$$
\lim_{n\to \infty} n^{-1} \ln P ( D_n >a) = - f_D(a),
$$
where the function  $f_D$ is continuous for sufficiently small $a>0,$ moreover $$
f_D(a) = (18 \delta^2)^{-1} a^2(1 + o(1)) \sim 4.966 a^2, \, \mbox{as}
\, \, a \to 0.
$$
\end{theorem}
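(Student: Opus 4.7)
The plan is to invoke the large deviation theorem of Nikitin \cite{Niki10} for the supremum of a family of non-degenerate $V$-statistics. Since $D_n = \sup_{t \geq 0} |H_n(t) - G_n(t)|$ and, for each fixed $t$, the difference $H_n(t) - G_n(t)$ is a $V$-statistic of degree $3$ with kernel $\Xi(X,Y,Z;t)$, we are exactly in the setting covered by that result, provided its hypotheses are verified.

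First, I would check the three standing assumptions. \emph{Centeredness} of the kernel family under $H_0$ is immediate from the Yanev--Chakraborty characterization, since for the exponential law $E H_n(t) = E G_n(t)$, which gives $E\Xi(X,Y,Z;t) \equiv 0$. \emph{Uniform boundedness} of $\Xi(X,Y,Z;t)$ in $(X,Y,Z,t)$ is clear: it is a linear combination of four indicator functions with coefficients bounded by $1$ in absolute value. \emph{Non-degeneracy} of the family, in the sense of \cite{Niki10}, amounts to $\delta^2 := \sup_{t \geq 0}\delta^2(t) > 0$, which has already been established with the explicit closed form for $\delta^2(t)$ displayed above and the numerical value $\delta^2 \approx 0.01119$. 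One also has to note that $\delta^2(t)$ is continuous on $[0,\infty)$ and tends to $0$ as $t \to \infty$, so the supremum is attained at some finite $t^*$.

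Having verified the hypotheses, the theorem from \cite{Niki10} then delivers directly the existence of a continuous rate function $f_D$ on a right-neighborhood of zero such that
$$\lim_{n\to \infty} n^{-1} \ln P(D_n > a) = -f_D(a),$$
together with the local asymptotic $f_D(a) \sim a^2/(2 m^2 \delta^2)$ as $a \to 0$, where $m$ is the degree of the kernel. Substituting $m=3$ yields the factor $2 m^2 = 18$ in the denominator, and inserting $\delta^2 \approx 0.01119$ gives the stated numerical constant $f_D(a) \sim 4.966\, a^2$. The same pattern was already observed in the preceding theorem for $I_n$, where $m = 4$ produced the factor $32 = 2m^2$ together with $\Delta^2$.

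The main obstacle is the verification of the hypotheses of the supremum version of the large deviation theorem, rather than any new calculation: one must ensure sufficient regularity of the kernel in the index $t$ (joint measurability, continuity of $\delta^2(t)$, and the fact that the supremum is effectively taken over a compact subinterval of $[0,\infty)$) so that the large deviation behavior of $D_n$ is governed by a single worst-case $t^{*}$ realizing $\delta^2 = \delta^2(t^{*})$. Once that is in place, the structural identity $f_D(a) \sim a^2/(2 m^2 \delta^2)$ comes straight out of the cited theorem, and the remaining work is the arithmetic already carried out.
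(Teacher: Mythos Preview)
Your proposal is correct and follows essentially the same route as the paper: verify that the family $\{\Xi(X,Y,Z;t)\}$ is centered, uniformly bounded, and non-degenerate with $\delta^2=\sup_t\delta^2(t)\approx 0.01119$, then invoke the large deviation theorem of \cite{Niki10} for suprema of $V$-statistics of degree $m=3$ to obtain $f_D(a)\sim a^2/(2m^2\delta^2)=a^2/(18\delta^2)$. If anything, you are more explicit than the paper in checking the regularity hypotheses (continuity of $\delta^2(t)$, attainment of the supremum), which the paper absorbs into the phrase ``non-degenerate in the sense of \cite{Niki10}.''
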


To evaluate the efficiency, first consider the Makeham alternative with the density
$g_1(x,\theta),\theta \geq 0, x \geq 0$ given above and corresponding df $G_1(x, \theta)$. By the Glivenko-Cantelli theorem
for $U$- and $V$-statistics \cite{Jan} the limit in probability under the alternative for statistics $D_n$ is equal to
\begin{gather*}
b_1(\theta):= \sup_{t\geq 0}|b_1(t,\theta)|=
\sup_{t\geq 0}
|P_{\theta}(\max(X, Y, Z)<t)-P_{\theta}(\max(X, Y)+ \frac{Z}{3}<t)|.
\end{gather*}
It is not difficult to show that
$$b_1(t,\theta) \sim 3\theta  \int_{0}^{\infty} \xi(s; t)h_1(s)ds,
$$
where again $
h_1(s)=\frac{\partial}{\partial\theta}g_1(s,\theta)\mid _{\theta=0}$ and $\xi(s;t)$ is the projection defined above in \eqref{xi}. Hence for the Makeham alternative we have for $t\geq 0:$
\begin{gather*}
b_1(t,\theta) \sim (\frac35e^{-t}-\frac92e^{-2t}+(1+6t)e^{-3t}+3e^{-4t}-\frac{1}{10}e^{-6t})\theta
, \quad \theta \to 0.
\end{gather*}

\begin{figure}[h!]
\begin{center}
\includegraphics[scale=0.35]{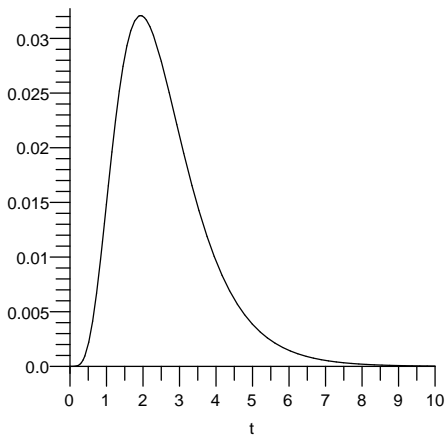}\caption{Plot of the function  $b_1(t,\theta), \mbox{ Makeham alt.}$ }
\end{center}
\end{figure}
Thus $b_1(\theta)=\sup_{t\geq 0}|b_1(t,\theta)| \sim
 0.032\,\theta,$
and it follows that the local exact slope of the sequence of statistics
$D_n$ admits the representation:
$$c_1(\theta) \sim b^2_1(\theta)/(9\delta^2) \sim 0.4599\,\theta^2, \, \theta \to 0.$$
The Kullback-Leibler "distance"\, in this case satisfies
$K_1(\theta) \sim \frac{\theta^2}{24},\, \theta \to 0$, and the local BE is
$0.123.$

Next we take the Weibull distribution, where the calculations are similar, and
the local BE is equal to $0.079.$ In the case of the Lehmann density
and the Gamma density we find that the local BE's
are 0.330 and 0.066. We collect the values of local BE in the Table 2.
\begin{table}[!hhh]\centering
\begin{tabular}{|c|c|c|c|c|}
\hline
Alternative & Makeham & Weibull & Lehmann &Gamma \\
\hline
Efficiency & 0.123 & 0.079  &  0.330 & 0.066 \\
\hline
\end{tabular}
\caption{Local Bahadur efficiencies of the statistic $D_n.$}
\end{table}

We observe that the efficiencies for the Kolmogorov-type test are lower than for the integral test. However,
it is the usual situation when testing goodness-of-fit \cite{Nik}, \cite{Rank}, \cite{Niki10}. Probably the low values of efficiencies for the Kolmogorov type test are related to the intrinsic properties of the underlying characterization of exponential law.

\bigskip
\section{Conditions of local asymptotic optimality}
\bigskip

The efficiency values of our tests for standard alternatives are far from maximal ones. Nevertheless, there exist such special alternatives (we call them "most favorable")
for which our sequences of statistics $I_n$ and $D_n$ are locally asymptotically optimal (LAO) in Bahadur sense, see general theory in \cite[Ch.6]{Nik}.
This means to describe the local structure of the
alternatives for which the given statistic has maximal potential local efficiency so that the relation
$$
c_T(\theta) \sim 2 K(\theta),\, \theta \to 0,
$$
holds, (see \cite{Nik}, \cite{NikTchir}).
Such alternatives form the domain of LAO for the given sequence of statistics.

Denote by $\cal G$
the class of densities $ g(\cdot \ ,\theta)$ with the df $G(\cdot \ ,\theta)$ which satisfy the regularity conditions
 listed below. Consider the functions
\begin{gather*}
H(x)=\frac{\partial}{\partial\theta}G(x,\theta)\mid
_{\theta=0},\quad
h(x)=\frac{\partial}{\partial\theta}g(x,\theta)\mid _{\theta=0}.
\end{gather*}

Suppose also that the following regularity conditions hold:
\begin{gather}
h(x)=H'(x), \,  x \geq 0, \quad \int_0^\infty h^2(x)e^{x}dx <  \infty  , \label{exp1}\\
\frac{\partial}{\partial\theta}\int_0^\infty x g(x,\theta)dx \mid
_{\theta=0} \ = \ \int_0^\infty x h(x)dx.\label{exp2}
\end{gather}
It is easy to show, see also \cite{NikTchir}, that under these conditions
$$ 2K(\theta)\sim \{\int_0^\infty h^2(x)e^{x}dx -(\int_0^\infty x h(x)dx)^2\} \theta^2,\, \theta \to 0.
$$

Let introduce the auxiliary function
\begin{gather}\label{h_0}
h_0(x) = h(x) - (x-1)\exp(-x)\int_0^\infty u h(u) du.
\end{gather}

First we consider the integral statistic $I_n$ with kernel
$\Psi(X_{1}, X_{2},X_{3}, X_{4})$ and the projection $\psi(x)$ from \eqref{psi} with corresponding variance $\Delta^2$ of the projection.

\begin{theorem}
Under regularity conditions \eqref{exp1}-\eqref{exp2} the alternative densities $ g(x ,\theta)$
constitute the LAO class in the class $\cal G$ for the integral statistic $I_n$ iff they have the form
$$
h(x) =g_{\theta} (x,0)= e^{-x}(C_1\psi(x)+
C_2(x-1))
$$
for some constants $C_1>0$ and $C_2 \in \mathbb{R}.$
\end{theorem}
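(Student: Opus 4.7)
The plan is to recast the LAO condition $c_I(\theta) \sim 2K(\theta)$ as the equality case of a Cauchy--Schwarz inequality in a suitable Hilbert space, and then read off the extremal densities directly.

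First I would collect the asymptotic ingredients already at hand. From the formula $b(\theta) \sim 4\theta \int_0^\infty \psi(s) h(s)\,ds$ used in the previous section, together with the slope expression implicit in Theorem~2, the local exact slope of $I_n$ satisfies
$$c_I(\theta) \sim \frac{\bigl(\int_0^\infty \psi(s) h(s)\,ds\bigr)^2}{\Delta^2}\,\theta^2, \qquad \theta \to 0,$$
while the paper already records
$$2K(\theta) \sim \Bigl(\int_0^\infty h^2(x) e^{x}\,dx - \bigl(\int_0^\infty x\,h(x)\,dx\bigr)^2\Bigr)\theta^2, \qquad \theta \to 0.$$
Both are most transparent in the weighted Hilbert space $\mathcal{H}=L^2([0,\infty),e^{-x}dx)$ with inner product $\langle u,v\rangle=\int_0^\infty u(x)v(x)e^{-x}dx$. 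Setting $\widetilde h(x):=e^{x}h(x)$, the quantities rewrite as $\int \psi h\,dx=\langle\psi,\widetilde h\rangle$, $\Delta^2=\|\psi\|^2$, $\int h^2 e^{x}dx=\|\widetilde h\|^2$, and $\int x h\,dx=\langle x,\widetilde h\rangle$.

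Next I would isolate the two orthogonality facts that drive the argument. The function $e(x):=x-1$ is a unit vector in $\mathcal{H}$ orthogonal to constants. Because the regularity conditions \eqref{exp1}--\eqref{exp2} force $H(0)=H(\infty)=0$, and hence $\int_0^\infty h=0$, one has $\langle\widetilde h,e\rangle=\int_0^\infty x\,h(x)\,dx$. A short direct integration against $e^{-s}$ of the explicit formula \eqref{psi} for $\psi$ yields $\langle\psi,1\rangle=0$ (i.e.\ centering of the kernel) and, crucially, $\langle\psi,e\rangle=0$. This is exactly the reason the auxiliary function in \eqref{h_0} is introduced: setting $\widetilde h_0(x):=e^{x}h_0(x)=\widetilde h(x)-\langle\widetilde h,e\rangle\,e(x)$, the function $\widetilde h_0$ is the projection of $\widetilde h$ onto the orthogonal complement of $\mathrm{span}(e)$, so by Pythagoras
$$2K(\theta) \sim \|\widetilde h_0\|^2\,\theta^2, \qquad \theta \to 0.$$

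Finally, using $\psi\perp e$, I would write $\langle\psi,\widetilde h\rangle=\langle\psi,\widetilde h_0\rangle$ and apply Cauchy--Schwarz:
$$\frac{c_I(\theta)}{2K(\theta)} \longrightarrow \frac{\langle\psi,\widetilde h_0\rangle^2}{\|\psi\|^2\,\|\widetilde h_0\|^2} \le 1,$$
with equality if and only if $\widetilde h_0$ is a scalar multiple of $\psi$. Consistency of the test along the alternative, i.e.\ $b(\theta)>0$ for small $\theta>0$, forces this constant to be positive, so the extremal case reads $\widetilde h=C_1\psi+C_2(x-1)$ with $C_1>0$ and $C_2\in\mathbb{R}$, i.e.\ $h(x)=e^{-x}(C_1\psi(x)+C_2(x-1))$, which is exactly the claim. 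The only step that is not purely formal within the Bahadur LAO template of \cite[Ch.6]{Nik} and \cite{NikTchir} is the verification $\langle\psi,x-1\rangle=0$; this is the single place where the specific shape of the Yanev--Chakraborty projection \eqref{psi} enters essentially, and it is the technical heart of the argument.
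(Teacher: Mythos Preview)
Your argument is correct and follows the same route as the paper: introduce $h_0$, reduce via the two identities $\int_0^\infty h_0^2 e^{x}dx = \int_0^\infty h^2 e^{x}dx - (\int_0^\infty xh\,dx)^2$ and $\int_0^\infty \psi h\,dx = \int_0^\infty \psi h_0\,dx$, then apply Cauchy--Schwarz. Your Hilbert-space repackaging makes explicit the orthogonality $\langle \psi, x-1\rangle=0$ that the paper hides behind the word ``straightforward''; note that this identity is in fact automatic from the scale-invariance of $I_n$, since $(x-1)e^{-x}$ is the exponential score in the scale parameter.
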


\begin{proof}
We recall that for the integral statistic (\ref{I_n_Ar}) we have $b_I(\theta) \sim 4\theta  \int_{0}^{\infty} \psi(x)h(x)dx.$
It is straightforward that
$$
\begin{array}{ll}
\medskip
\int_0^\infty h^2(x)e^{x}dx -(\int_0^\infty x h(x)dx)^2 = \int_0^\infty h_0^2(x) e^{x} dx,\\
\int_{0}^{\infty} \psi(x)h(x)dx = \int_{0}^{\infty} \psi(x)h_0(x)dx.
\end{array}
$$

Consequently the local BE takes the form
\begin{gather*}
e^B(I)= \lim_{\theta \to 0} b_I^2(\theta) / \left(32\Delta^2 K(\theta)\right) =\\
= \left( \int_{0}^{\infty} \psi(x)h_0(x)dx\right)^2/\left(
\int_{0}^{\infty}\psi^2(x) e^{-x}dx \cdot  \int_0^\infty h_0^2(x)e^{x}dx
 \right).
\end{gather*}

The local Bahadur asymptotic optimality means that the expression
in the right-hand side is equal to 1. It follows from Cauchy-Schwarz inequality,
see also \cite{NiPe},
that it happens iff
$h_0(x)=C_1 e^{-x}\psi(x)$ for some constants $C_1>0,$
so that $h(x) = e^{-x}(C_1\psi(x)+
C_2(x-1))$ for some constants $C_1>0$ and $C_2.$
\end{proof}

The example of such alternative is the density $g(x,\theta)$ which for  small $\theta > 0$  satisfies the formula
\begin{equation*}
g(x,\theta)=e^{-x}\left(1+\theta
\left(\frac{3}{8}e^{-x}-\frac{9}{16}e^{-2x}+\frac{1}{4}e^{-3x}-\frac{1}{12}e^{-x/3}\right)\right), x \geq 0.
\end{equation*}

Now consider the Kolmogorov-type statistic (\ref{D_n_Ar}) with the family of kernels
$\Xi(X, Y, Z ;t)$ and the projection $\xi(x;t)$ from \eqref{xi} with corresponding variances $\delta^2(t)$ of these projections.
\begin{theorem}
Under regularity conditions \eqref{exp1}-\eqref{exp2} the alternative densities $ g(x ,\theta)$
form the domain of LAO in the class $\cal G$ for the statistic $D_n$ iff the function $ h(x)$ has the form
$$
h(x)=e^{-x}(C_1\xi(x; t_0)+ C_2(x-1))
$$
for
\begin{equation}
\label{tt}
t_0= \arg \max_{t\geq0} \delta^2(t)
\end{equation}
and some constants $C_1>0, \, C_2 \in \mathbb{R}.$
\end{theorem}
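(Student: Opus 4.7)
The plan is to mirror the proof of Theorem~4 (the LAO characterization for $I_n$), adapted to the two new features that here the ``projection'' is a family $\xi(\cdot\,;t)$ indexed by $t$ and the denominator in the slope involves $\delta^2 = \sup_{t\geq 0}\delta^2(t)$ rather than a single variance. Combining Theorem~3 with the relation $b_1(t,\theta) \sim 3\theta\int_0^\infty \xi(s;t)h(s)\,ds$ and the standard asymptotic $2K(\theta) \sim \bigl(\int_0^\infty h_0^2(x)e^{x}\,dx\bigr)\theta^2$, the LAO condition $\lim_{\theta\to 0} c_D(\theta)/(2K(\theta)) = 1$ reduces to the identity
\begin{equation*}
\sup_{t\geq 0}\Bigl(\int_0^\infty \xi(s;t)h(s)\,ds\Bigr)^{\!2} \;=\; \delta^2\int_0^\infty h_0^2(x)\,e^{x}\,dx.
\end{equation*}

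The first step is to rewrite the left-hand side in terms of $h_0$ instead of $h$, exactly as was done in the proof of Theorem~4. Since $D_n$ is scale-invariant, the scale tangent $(x-1)e^{-x}$ at $\lambda=1$ lies in the kernel of each linear functional $h \mapsto \int_0^\infty \xi(s;t)h(s)\,ds$; equivalently, $\int_0^\infty \xi(s;t)(s-1)e^{-s}\,ds = 0$ for every $t \geq 0$, an identity that can alternatively be verified by direct (elementary but tedious) integration from \eqref{xi}. Consequently $\int_0^\infty \xi(s;t)h(s)\,ds = \int_0^\infty \xi(s;t)h_0(s)\,ds$ for every $t$.

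For each fixed $t$, splitting $\xi(s;t)h_0(s) = [\xi(s;t)e^{-s/2}]\cdot[h_0(s)e^{s/2}]$ and applying Cauchy--Schwarz yields
\begin{equation*}
\Bigl(\int_0^\infty \xi(s;t)h_0(s)\,ds\Bigr)^{\!2} \;\leq\; \delta^2(t)\int_0^\infty h_0^2(s)\,e^{s}\,ds,
\end{equation*}
with equality iff $h_0(s) = C\,e^{-s}\xi(s;t)$ for some constant $C$. Passing to $\sup_t$ on the right produces the bound $\delta^2\int h_0^2\,e^{x}\,dx$, so the two-stage inequality becomes an equality iff there exists a single point $t_0$ at which \emph{both} Cauchy--Schwarz is saturated \emph{and} $\delta^2(t_0) = \sup_{t\geq 0}\delta^2(t)$. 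This forces $t_0$ to satisfy \eqref{tt} and $h_0(x) = C_1\,e^{-x}\xi(x;t_0)$ for some $C_1 > 0$ (the sign is the natural convention fixing the direction of the alternative). Restoring $h(x) = h_0(x) + C_2(x-1)e^{-x}$ with $C_2 := \int_0^\infty uh(u)\,du \in \mathbb{R}$ gives the announced representation.

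The main obstacle, in contrast to the integral-type case, is precisely this coupling of two suprema: one must argue that the $t$ realizing equality in Cauchy--Schwarz can be chosen to also maximize $\delta^2(t)$, which is what pins $t_0$ down as the argmax in \eqref{tt}. A subsidiary technicality is the orthogonality of $\xi(\cdot\,;t)$ to the scale tangent $(x-1)e^{-x}$; if one prefers not to invoke scale-invariance of $D_n$, it reduces to a finite sum of one-dimensional integrals via \eqref{xi}. Both points are routine once identified, and the rest of the argument is a faithful transcription of the Theorem~4 proof.
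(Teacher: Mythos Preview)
Your proposal is correct and follows essentially the same route as the paper: express the local Bahadur efficiency as the ratio $\sup_t\bigl(\int \xi(x;t)h_0(x)\,dx\bigr)^2\big/\bigl(\sup_t \delta^2(t)\cdot\int h_0^2 e^{x}\,dx\bigr)$, bound it by $1$ via Cauchy--Schwarz at each $t$ followed by $\delta^2(t)\le\delta^2$, and read off the equality case. The paper's own proof is considerably more compressed and simply asserts the conclusion from the displayed ratio; your explicit treatment of the orthogonality $\int_0^\infty \xi(s;t)(s-1)e^{-s}\,ds=0$ and of the coupling of the two suprema (forcing $t_0=\arg\max_t\delta^2(t)$) fills in steps the paper leaves implicit, but the underlying argument is the same.
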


\begin{proof}
In this case we recall that for the statistic (\ref{D_n_Ar}) we have
$$b_D(t,\theta) \sim 3\theta  \int_{0}^{\infty} \xi(x; t)h(x)dx,
$$

Therefore the local BE is equal to
\begin{multline*}
e^B (D)= \lim_{\theta \to 0} b_D^2(\theta)/ \sup_{t\geq 0}\left(18
\delta^2(t)\right) K(\theta) =\\ = \sup_{t\geq 0}\left( \int_{0}^{\infty}
 \xi(x;t)h_0(x)dx\right)^2 / \ \sup_{t\geq 0} \left(
\int_{0}^{\infty}\xi^2 (x;t) e^{-x}dx \cdot \int_0^\infty h_0^2 e^{x} dx\right).
\end{multline*}

It follows that the sequence of statistics $D_n$ is locally optimal
iff  $h_0(x)=e^{-x}C_1\xi(x; t_0)$
for $t_0$ from (\ref{tt})
and some constants $C_1>0, \, C_2.$ Using (\ref{h_0}) we complete the proof.
\end{proof}

 The simplest example of
such alternative density  $g(x,\theta)$  for small $\theta > 0$ is given by the formula
\begin{gather*}
\label{special1}
g(x,\theta)=e^{-x} (1+\theta
\textbf{1}\{x <t_0\}\left[\frac13-e^{-t_0}+e^{-2t_0}-e^{2x-3t_0}+\frac23e^{3x-3t_0}\right]-\\
-\frac{\theta}{3} \textbf{1}\{x<3t_0\}(1-e^{t_0-x/3})^2 ), x \geq 0,
\end{gather*}
where $t_0$ is from (\ref{tt}).

\end{document}